\documentclass[11pt,reqno]{amsart}

\usepackage[utf8]{inputenc}
\usepackage{amsmath,amssymb,amsthm,mathtools}
\usepackage{xcolor}
\usepackage[colorlinks=true,linkcolor=blue,citecolor=blue,urlcolor=blue]{hyperref}
\usepackage[capitalize]{cleveref}

\usepackage{aliascnt}
\theoremstyle{plain}
\newtheorem{theorem}{Theorem}[section]
\newaliascnt{lemma}{theorem}
\newtheorem{lemma}[lemma]{Lemma}
\aliascntresetthe{lemma}
\newaliascnt{proposition}{theorem}
\newtheorem{proposition}[proposition]{Proposition}
\aliascntresetthe{proposition}
\newaliascnt{corollary}{theorem}
\newtheorem{corollary}[corollary]{Corollary}
\aliascntresetthe{corollary}
\theoremstyle{remark}
\newaliascnt{remark}{theorem}
\newtheorem{remark}[remark]{Remark}
\aliascntresetthe{remark}

\numberwithin{equation}{section}

\newcommand{\R}{\mathbb{R}}

\title[A compactly supported real scalar Hardy endpoint]
      {Compactly supported real scalar potentials realizing the Hardy uncertainty
       endpoint for Schr\"odinger evolutions}

\author[Xiao-Ming Fu]{Xiao-Ming Fu}
\address{School of Mathematical Sciences, University of Science and Technology of China,
Hefei, Anhui 230026, P.R.\ China}
\email{fuxm@ustc.edu.cn}
\author[Tianyang Sun]{Tianyang Sun}
\address{School of Mathematical Sciences, University of Science and Technology of China,
Hefei, Anhui 230026, P.R.\ China}
\email{tysun@mail.ustc.edu.cn}

\date{July 2026}
\subjclass[2020]{Primary 35Q41; Secondary 35A02, 35B60, 42B10}
\keywords{Schr\"odinger equation, Hardy uncertainty principle, critical Gaussian decay,
endpoint nonuniqueness, real-valued scalar potential, compactly supported potential}

\begin{document}

\begin{abstract}
At the critical Hardy Gaussian weight for the Schr\"odinger equation in one space
dimension on $[0,1]$, the known nonzero scalar example in the weighted
$L^2$ class carries a complex-valued potential. Cassano and Fanelli observed that the
existence of a \emph{real-valued} scalar endpoint example was open, and produced examples
with real electric and magnetic potentials only after introducing a magnetic potential.
We construct a nonzero smooth solution of
$i\partial_t u+\partial_x^2 u=Vu$ on $\R\times[0,1]$ such that
$e^{x^2/4}u(\cdot,0),e^{x^2/4}u(\cdot,1)\in L^2(\R)$ and the potential is
\emph{bounded, smooth, real-valued, and purely scalar} and is supported in one fixed compact
spatial interval for all times. This compact-support endpoint example is the main result.
We also record the explicit rational-tail realization underlying the construction.
The main results of this paper were obtained by the multi-agent system Eureka and have
subsequently been verified by the authors.
\end{abstract}

\maketitle

\section{Introduction and statement of results}\label{sec:intro}

Consider the one-dimensional Schr\"odinger evolution on the normalized time interval
$[0,1]$,
\begin{equation}\label{eq:schr}
  i\partial_t u+\partial_x^2u = V(x,t)\,u
  \qquad\text{on }\R\times[0,1],
\end{equation}
with a bounded potential $V\in L^\infty(\R\times[0,1])$. Hardy's uncertainty
principle \cite{Hardy-1933} states, in its dynamical reformulation, that Gaussian
decay of $u$ at two distinct times forces $u\equiv0$ once the decay rates are strong
enough. The sharp nonfree quantitative form was obtained in
\cite{EKPV-Duke-2010}, building on the free and logarithmic-convexity results
\cite{EKPV-free-2008,EKPV-JEMS-2008} (see also
\cite{Cowling-2010,EKPV-BAMS-2012} for surveys and refinements): if $u$ solves
\eqref{eq:schr} and the bounded potential $V$ satisfies suitable spatial-decay or
structural assumptions, and, for some $\alpha,\beta>0$,
\begin{equation}\label{eq:weights}
  e^{x^2/\beta^2}u(\cdot,0)\in L^2(\R),
  \qquad
  e^{x^2/\alpha^2}u(\cdot,1)\in L^2(\R),
\end{equation}
then
\begin{equation}\label{eq:sharp}
  \alpha\beta<4 \;\Longrightarrow\; u\equiv 0 .
\end{equation}
The constant $4$ is sharp: at $\alpha\beta=4$, Escauriaza, Kenig, Ponce and Vega
constructed a bounded complex-valued potential admitting a nonzero smooth solution
satisfying \eqref{eq:weights} \cite[Thm.~2]{EKPV-Duke-2010}. Uniformly for
$t\in[0,1]$, their potential obeys
\[
  |V(x,t)|\lesssim \frac{1}{1+|x|^2}.
\]

The earlier logarithmic-convexity uniqueness result was established in
\cite{EKPV-JEMS-2008}, while the sharp threshold and the complex-valued
endpoint construction were obtained in \cite{EKPV-Duke-2010}. The existence
of an analogous real-valued endpoint example was explicitly posed as an open
problem in the later survey \cite{EKPV-BAMS-2012}. Under certain structural
assumptions, Cassano and Fanelli proved the sharp threshold \eqref{eq:sharp} for
electromagnetic potentials \cite{CassanoFanelli-2015}. By introducing a real magnetic
potential $\mathbf A$, they constructed an endpoint example in which both the electric
and magnetic potentials are real-valued. Their computation also shows directly why this
is not already a real scalar example: for their explicit profile (in their notation),
\[
  \operatorname{Im}\frac{(i\partial_t+\Delta)u}{u}
  =-\frac{2kt}{(1+t^2)(1+r^2)}\not\equiv0,
\]
and the term $-i\,\operatorname{div}\mathbf A$ in the magnetic Laplacian cancels precisely
this imaginary part \cite[Sec.~2]{CassanoFanelli-2015}. Since their profile never vanishes,
the same solution cannot satisfy a purely scalar equation with a real potential. Moreover,
their magnetic field satisfies $\operatorname{curl}\mathbf A\not\equiv0$, so $\mathbf A$
cannot be removed by a gauge transformation. The standard Euclidean, scaling, and Appell
transformations preserve whether the magnetic field vanishes identically and therefore do
not transform their construction into the purely scalar setting. See also
\cite{Barcelo-2013} for non-sharp covariant Hardy inequalities in the magnetic setting.

The purpose of this note is to give a nonzero endpoint solution in the purely scalar case
with a real potential. More strongly, the potential can be supported in a fixed compact
spatial interval for all $t\in[0,1]$.
By the standard scaling and Appell transformations, general endpoint parameters
$T>0$ and $\alpha\beta=4T$ reduce to $T=1$ and $\alpha=\beta=2$. We therefore state and
prove the result in this symmetric normalization.

Conceptually, this example shows that none of the following is necessary for endpoint
nonuniqueness: gain or dissipation from an imaginary part of the potential, the additional
freedom of a magnetic field or gauge structure, or a slowly decaying long-range tail at
spatial infinity. It thereby isolates the interaction between Schr\"odinger evolution and
the critical Gaussian weight itself.

\begin{theorem}\label{thm:main}
Equation \eqref{eq:schr} admits a nonzero smooth solution $u$ on $\R\times[0,1]$ with a
potential
\[
  V\in C^\infty(\R\times[0,1];\R)\cap L^\infty(\R\times[0,1])
\]
that is real-valued, purely scalar, and supported in a fixed compact spatial interval
for all $t\in[0,1]$,
such that
\[
  e^{x^2/4}u(\cdot,0)\in L^2(\R),
  \qquad
  e^{x^2/4}u(\cdot,1)\in L^2(\R).
\]
\end{theorem}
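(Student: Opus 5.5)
The plan is to build $u$ as a time-periodic, real-profile solution of a harmonic oscillator with a compactly supported perturbation, and to transport it to \eqref{eq:schr} with a lens (pseudo-conformal) transformation. The criterion to keep in mind is that a nonvanishing smooth $u$ solves \eqref{eq:schr} with a real scalar potential exactly when $V:=(i\partial_tu+\partial_x^2u)/u$ is real. We also want $V\equiv0$ outside a compact set.

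\textbf{Step 1: lens transform.} Fix a rescaled lens transform
\[
u(x,t)=\frac{1}{\sqrt{\ell(t)}}\,e^{i\mu(t)x^2}\,w\bigl(x/\ell(t),s(t)\bigr),
\]
with $\ell,\mu,s$ chosen (for instance $\ell(t)^2=a+b(t-\tfrac12)^2$ with $\mu=\ell'/(4\ell)$, the standard choice) so that
\begin{itemize}
\item $i\partial_tu+\partial_x^2u$ corresponds to $\ell^{-2}\,(i\partial_sw+\partial_y^2w-y^2w)$ after the standard bookkeeping, up to fixing the constants $a,b$;
\item $s$ ranges over a compact interval as $t$ runs over $[0,1]$.
\end{itemize}
The free critical Gaussian at $t=0,1$ corresponds to the ground state $e^{-y^2/2}$. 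Because the transformation only multiplies by the unimodular factor $e^{i\mu x^2}$ and a positive scale, it sends a real potential $W(y)$ to the real potential $V(x,t)=\ell(t)^{-2}W(x/\ell(t))$. If $\operatorname{supp}W\subset[-R,R]$, then $V$ is supported in $|x|\le R\max_{[0,1]}\ell$, which is a fixed compact interval, and $V$ is smooth and bounded since $\ell$ is bounded above and below. A key check is that the weight $e^{x^2/4}$ at $t=0$ and $t=1$ becomes $e^{y^2/2}$ times a harmless factor in $y$, since the quadratic phase has modulus one. I will tune $a,b$ so this holds exactly at both endpoints, using the symmetry $t\mapsto1-t$.

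\textbf{Step 2: stationary profile.} Take $w(y,s)=e^{-iEs}\varphi(y)$ with $\varphi>0$ real. Then $W=E+\varphi''/\varphi-y^2$ is automatically real, so the problem reduces to an ODE. For $|y|\ge R$ let $\varphi(y)=D_\nu(\sqrt2|y|)$, a parabolic cylinder function solving $-\varphi''+y^2\varphi=(2\nu+1)\varphi$; on that region $W\equiv0$ with $E=2\nu+1$.

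Choose $\nu<-\tfrac12$. Then $D_\nu$ is positive on $[0,\infty)$ and satisfies $D_\nu(\sqrt2y)\sim c\,y^{\nu}e^{-y^2/2}$, so $e^{y^2/2}\varphi\sim|y|^{\nu}\in L^2$ at infinity. On $[-R,R]$, interpolate with any smooth even positive function matching $\varphi$ to all orders at $\pm R$. Positivity of $\varphi$ gives $W\in C_c^\infty$ and real. The interpolation is easy, because we never ask $\varphi$ to be an eigenfunction: it simply grows like the non-$L^2$ branch on neither side, since we use the decaying branch on each half-line separately. The resulting $u$ is nonzero and smooth, because $\varphi$ is smooth and positive and the transformation is smooth.

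\textbf{Main obstacle.} The delicate point is Step 1: getting the normalization of the lens transform exactly right. The endpoint weights $e^{x^2/4}$ must translate precisely into $e^{y^2/2}$ with no leftover Gaussian factor $e^{\pm\epsilon y^2}$ at $t=0,1$. That must happen simultaneously with a nondegenerate, bounded $\ell$ on $[0,1]$, which is needed for compact support and boundedness of $V$. This is exactly where criticality $\alpha\beta=4$ enters.

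I would first verify the formula on the model free endpoint Gaussian from \cite{EKPV-Duke-2010}, and fix $a,b$ by requiring that it be mapped to $e^{-iEs}e^{-y^2/2}$. After that, the polynomial factor $|y|^{\nu}$ with $\nu<-\tfrac12$ supplies the $L^2$ integrability that the pure Gaussian just misses.
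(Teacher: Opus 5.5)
Your proposal is correct and is essentially the paper's proof. The paper's ansatz $y(t)^{-1/2}F(x/y(t))\,e^{i(bx^2+\theta)}$ is your lens transform with $\ell=y/\sqrt2$. The constants you left open are forced: $\ell''=4/\ell^3$ gives $ab=4$, and $\ell(0)^2=\ell(1)^2=2$ gives $a+b/4=2$, so $\ell(t)^2=1+4(t-\tfrac12)^2$; this is the paper's ODE $y''=16/y^3$ in your scaling. Your tail $D_\nu(\sqrt2\,|x|/\ell)$ with $\nu<-\tfrac12$ equals $2^{\nu/2}e^{-\xi^2}U(-\tfrac{\nu}{2},\tfrac12,2\xi^2)$ with $\xi=x/y$. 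That is exactly the paper's Tricomi profile with parameter $-\nu/2>\tfrac14$, and your positive interpolation on $[-R,R]$ and support bound $|x|\le R\max\ell$ match its argument.
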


The construction transports a fixed positive profile by a pseudoconformal (lens)
quadratic phase. The potential is recovered by inverse design from the continuity equation,
which forces its imaginary part to vanish identically. A suitable profile makes the
resulting potential vanish outside a compact interval. We also give an explicit elementary
rational-tail profile whose potential is available in closed form and is $O(|x|^{-2})$.

The construction is elementary and self-contained, using only the pseudoconformal
symmetry of the free Schr\"odinger equation and a one-line ordinary differential
equation for the Gaussian width. The remainder of the note is organized as follows.
\Cref{sec:construction} proves \cref{thm:main}. \Cref{sec:remarks} comments on the
relation to \cite{CassanoFanelli-2015}.

\section{The construction}\label{sec:construction}

Throughout this section the space dimension is one and the time interval is $[0,1]$.
We use the symmetric endpoint widths $A=B=2$, and write $s=|\xi|$ and $r=|x|$.

\subsection{The transported profile}
Define the Gaussian width
\begin{equation}\label{eq:y}
  y(t)=2\bigl((1-t)^2+t^2\bigr)^{1/2},
  \qquad t\in[0,1].
\end{equation}
Since $4(1-t)^2+4t^2$ is strictly positive on $[0,1]$, the function $y$ is
smooth and bounded below by a positive constant. The following lemma records the two
properties of $y$ that drive the construction.

\begin{lemma}\label{lem:y}
The function $y$ in \eqref{eq:y} satisfies $y(0)=y(1)=2$, and the ordinary
differential equation
\begin{equation}\label{eq:ode}
  y''=\frac{16}{y^{3}} \qquad\text{on }[0,1].
\end{equation}
\end{lemma}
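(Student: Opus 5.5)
The plan is to verify both claims by direct computation, writing $y=2q^{1/2}$ with $q(t)=(1-t)^2+t^2=2t^2-2t+1$. The endpoint values are immediate: $q(0)=q(1)=1$, so $y(0)=y(1)=2$. Moreover $q$ is a quadratic with positive discriminant-free minimum $q(1/2)=1/2>0$. Hence $y$ is smooth on $[0,1]$ and $y\ge \sqrt2$, as already noted after \eqref{eq:y}.

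For the differential equation \eqref{eq:ode}, I would differentiate twice. Since $q'=4t-2$ and $q''=4$, we get
\[
y'=q'q^{-1/2},\qquad
y''=q''q^{-1/2}-\tfrac12 (q')^2q^{-3/2}=q^{-3/2}\bigl(4q-\tfrac12(4t-2)^2\bigr).
\]
The key identity is that $4q-\tfrac12(4t-2)^2=8t^2-8t+4-(8t^2-8t+2)=2$ is constant. This is the usual fact that $2qq''-(q')^2$ is the (constant) discriminant-type invariant of a quadratic. Thus $y''=2q^{-3/2}$.

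Finally, since $y^3=8q^{3/2}$, we have $16/y^3=2q^{-3/2}$, which matches $y''$. This completes the verification.

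There is no real obstacle here; the only point requiring care is recognizing the constant combination $2qq''-(q')^2=4$. That constant reflects the Ermakov--Pinney structure: $y=\sqrt{y_1^2+y_2^2}$ with $y_1=2(1-t)$ and $y_2=2t$ linear, which gives $y''=W^2/y^3$ with Wronskian $W=4$. I would mention this as a remark explaining the choice of $y$.
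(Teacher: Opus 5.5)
Your proof is correct and is essentially the paper's argument: both reduce to the fact that $2qq''-(q')^2$ is constant for a quadratic $q$. The paper takes $q=y^2$ and gets the constant $64$; you normalize $q=y^2/4$ and get $4$, which is the same computation up to scaling. Your Ermakov--Pinney remark, with $y_1=2(1-t)$, $y_2=2t$ and Wronskian $W=4$, is a nice explanation of the choice of $y$ that the paper does not spell out.
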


\begin{proof}
The endpoint values are immediate from \eqref{eq:y}. Write
$q=y^2=4-8t+8t^2$.
For any quadratic $q=at^2+bt+c$ one has the identity $2q''q-(q')^2=4ac-b^2$, a constant.
Here
\[
  4ac-b^2=4\cdot8\cdot4-(-8)^2=64.
\]
Since $y=q^{1/2}$ gives
$y''=\bigl(2q''q-(q')^2\bigr)/(4q^{3/2})=\bigl(2q''q-(q')^2\bigr)/(4y^3)$, we obtain
$y''=64/(4y^3)=16/y^3$.
\end{proof}

Set
\begin{equation}\label{eq:b}
  b(t)=\frac{y'(t)}{4y(t)},
\end{equation}
and let $F\colon\R\to\R$ be a smooth profile (to be specified). Define the transported
ansatz
\begin{equation}\label{eq:ansatz}
  u(x,t)=y(t)^{-1/2}\,F\!\bigl(x/y(t)\bigr)\,
  \exp\!\bigl(i\,b(t)x^2+i\,\theta(t)\bigr),
\end{equation}
with a real phase $\theta$ to be chosen. Write $u=\rho\,e^{i\phi}$ with
\[
  \rho(x,t)=y^{-1/2}F(x/y),
  \qquad
  \phi(x,t)=b(t)x^2+\theta(t).
\]

\begin{lemma}[Reality of the potential]\label{lem:real}
For any real profile $F$ and with $b=y'/(4y)$ as in \eqref{eq:b}, the amplitude and
phase satisfy the continuity equation
\begin{equation}\label{eq:cont}
  \rho_t+2\rho_x\phi_x+\rho\,\phi_{xx}=0 .
\end{equation}
Consequently, wherever $u\ne0$,
\[
  \frac{i\partial_t u+\partial_x^2u}{u}
  = -\phi_t-|\phi_x|^2+\frac{\rho_{xx}}{\rho}
  \;\in\;\R,
\]
so that the potential $V:=-\phi_t-|\phi_x|^2+\rho_{xx}/\rho$ defined by
\eqref{eq:schr} is real-valued.
\end{lemma}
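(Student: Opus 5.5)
The argument is a direct computation in the variable $z=x/y(t)$, followed by the standard Madelung decomposition of the Schr\"odinger operator applied to $u=\rho e^{i\phi}$. The plan is first to verify the continuity equation \eqref{eq:cont}, and then to split $(i\partial_t+\partial_x^2)u/u$ into real and imaginary parts, recognizing that the imaginary part is exactly the left side of \eqref{eq:cont} divided by $\rho$.

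\emph{Step 1: the continuity equation.} With $\rho=y^{-1/2}F(z)$ and $z=x/y$, we have
\[
\rho_t=-\tfrac12 y^{-3/2}y'F(z)-y^{-1/2}F'(z)\,\frac{x y'}{y^2},
\qquad
\rho_x=y^{-3/2}F'(z).
\]
From $\phi=bx^2+\theta$ we get $\phi_x=2bx$ and $\phi_{xx}=2b$. Hence
\[
2\rho_x\phi_x+\rho\phi_{xx}=4b\,x\,y^{-3/2}F'(z)+2b\,y^{-1/2}F(z).
\]
Substituting $b=y'/(4y)$ gives $4b=y'/y$ and $2b=y'/(2y)$. The two resulting terms cancel $\rho_t$ term by term: the $F'$ term is $x y' y^{-5/2}F'$ against $-y^{-1/2}F'\,x y'/y^2$, and the $F$ term is $\tfrac12 y'y^{-3/2}F$ against $-\tfrac12 y^{-3/2}y'F$. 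This proves \eqref{eq:cont}. Nothing about $F$ beyond smoothness and reality is used, and $\theta$ plays no role here.

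\emph{Step 2: splitting into real and imaginary parts.} For $u=\rho e^{i\phi}$ with $\rho,\phi$ real,
\[
e^{-i\phi}(i\partial_t+\partial_x^2)u
= i\rho_t-\rho\phi_t+\rho_{xx}+2i\rho_x\phi_x+i\rho\phi_{xx}-\rho\phi_x^2 .
\]
Wherever $u\neq0$ we have $\rho\neq0$, so we may divide by $\rho$:
\[
\frac{(i\partial_t+\partial_x^2)u}{u}=\Bigl(-\phi_t-\phi_x^2+\frac{\rho_{xx}}{\rho}\Bigr)+\frac{i}{\rho}\bigl(\rho_t+2\rho_x\phi_x+\rho\phi_{xx}\bigr).
\]
By Step 1 the imaginary part vanishes identically. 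This yields the stated real expression and shows that $V$ is real-valued on the set $\{u\ne0\}$.

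\emph{Main obstacle.} There is no real obstacle; the only care needed is the chain rule for $F(x/y(t))$ in time, where the factor $x y'/y^2$ must be tracked correctly so that the cancellation against $4b\,x$ is exact. One should also remark that the identity is only claimed where $u\neq0$, i.e.\ where $F(x/y)\neq0$. Extending $V$ across zeros of $F$ is a separate issue, left to the later choice of a positive profile.
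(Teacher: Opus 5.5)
Your proposal is correct and follows essentially the same route as the paper: the Madelung split of $e^{-i\phi}(i\partial_t+\partial_x^2)(\rho e^{i\phi})$ into real and imaginary brackets, plus a direct chain-rule check in $z=x/y$ that $4b=y'/y$ kills both the $F'$ and $F$ terms of $\rho_t+2\rho_x\phi_x+\rho\phi_{xx}$. The only difference is that the paper does the splitting first and the cancellation second, which is immaterial.
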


\begin{proof}
Writing $u=\rho e^{i\phi}$ and separating real and imaginary parts,
\[
  i\partial_t u+\partial_x^2u
  =\Bigl[\bigl(-\rho\phi_t+\rho_{xx}-\rho|\phi_x|^2\bigr)
  +i\bigl(\rho_t+2\rho_x\phi_x+\rho\phi_{xx}\bigr)\Bigr]e^{i\phi}.
\]
Hence $(i\partial_tu+\partial_x^2u)/u$ is real precisely when the imaginary bracket
\eqref{eq:cont} vanishes. To check \eqref{eq:cont}, put $\xi=x/y$. From
$\rho=y^{-1/2}F(\xi)$ one computes
\[
  \rho_t=-\frac12\frac{y'}{y}\rho-\frac{y'}{y}\,y^{-1/2}\,\xi F'(\xi),
  \qquad
  \rho_x=y^{-3/2}F'(\xi).
\]
With $\phi=bx^2+\theta$ we have $\phi_x=2bx$ and $\phi_{xx}=2b$, so
\[
  2\rho_x\phi_x=4b\,y^{-1/2}\,\xi F'(\xi),
  \qquad
  \rho\,\phi_{xx}=2b\,y^{-1/2}F(\xi).
\]
Summing,
\[
  \rho_t+2\rho_x\phi_x+\rho\phi_{xx}
  =y^{-1/2}\Bigl[\Bigl(4b-\tfrac{y'}{y}\Bigr)\xi F'
  +\Bigl(2b-\tfrac12\tfrac{y'}{y}\Bigr)F\Bigr].
\]
Both coefficients vanish because $b=y'/(4y)$, i.e.\ $4b=y'/y$. This proves
\eqref{eq:cont}, and the displayed formula for $V$ follows.
\end{proof}

\Cref{lem:real} makes $V$ automatically real for \emph{any} real profile. The remaining
freedom in $F$ and $\theta$ is used to make $V$ bounded.

\subsection{Cancellation of the quadratic term}
Expanding $V=-\phi_t-|\phi_x|^2+\rho_{xx}/\rho$ with $\phi=bx^2+\theta$ gives,
using $\rho_{xx}/\rho=y^{-2}(F''/F)(\xi)$,
\begin{equation}\label{eq:Vexpand}
  V(x,t)=-\bigl(b'+4b^2\bigr)x^2-\theta'
  +\frac1{y^2}\,\frac{F''}{F}\Bigl(\frac{x}{y}\Bigr).
\end{equation}
A direct computation from $b=y'/(4y)$ yields $b'+4b^2=y''/(4y)$. Meanwhile, for the even
profile $F=e^{g(s)}$ (with $s=|\xi|$) one has, with smooth extension at $s=0$,
\begin{equation}\label{eq:LapF}
  \frac{F''}{F}=g''+(g')^2 .
\end{equation}
We now choose
\begin{equation}\label{eq:profileF}
  F_N(\xi)=e^{-\xi^2}(1+\xi^2)^{-k}=e^{g(s)},
  \qquad g(s)=-s^2-k\log(1+s^2),
\end{equation}
for a parameter $k>0$ specified below, and set $p(s)=-k\log(1+s^2)$, so $g=-s^2+p$.
Substituting into \eqref{eq:LapF},
\begin{equation}\label{eq:LapFN}
  \frac{F_N''}{F_N}=4s^2-2+R_N(s),
  \qquad
  R_N(s)=p''+(p')^2-4sp'.
\end{equation}
For $p(s)=-k\log(1+s^2)$ one finds the closed form
\begin{equation}\label{eq:RN}
  R_N(s)=\frac{2k\bigl(4s^4+(2k+5)s^2-1\bigr)}{(1+s^2)^2},
  \qquad
  R_{N,\infty}:=\lim_{s\to\infty}R_N(s)=8k,
\end{equation}
and moreover $R_N(s)-8k=(4k^2-6k)s^{-2}+O(s^{-3})$ as $s\to\infty$, so
$R_N$ is bounded and smooth on $[0,\infty)$ with $R_N(s)-R_{N,\infty}=O(s^{-2})$.

Insert \eqref{eq:LapFN} into \eqref{eq:Vexpand}. The quadratic term $4s^2/y^2=4x^2/y^4$
coming from $F''/F$ combines with $-(b'+4b^2)x^2=-\tfrac{y''}{4y}x^2$ to give the
coefficient
\[
  -\frac{y''}{4y}+\frac{4}{y^4}\quad\text{of }x^2,
\]
which vanishes precisely by the ODE \eqref{eq:ode}. Therefore, choosing
\begin{equation}\label{eq:theta}
  \theta'(t)=\frac{-2+R_{N,\infty}}{y(t)^2}=\frac{8k-2}{y(t)^2},
\end{equation}
we are left with
\begin{equation}\label{eq:VN}
  V_N(x,t)=\frac{R_N(|x|/y(t))-R_{N,\infty}}{y(t)^2}
  =\frac{2k\bigl((2k-3)x^2-5y(t)^2\bigr)}{\bigl(x^2+y(t)^2\bigr)^2} .
\end{equation}
The second equality follows by substituting $s=|x|/y(t)$ into \eqref{eq:RN}, and
exhibits $V_N$ as a manifestly real and smooth function.
By \eqref{eq:RN} this $V_N$ is real, smooth, bounded, and $O(|x|^{-2})$ as
$|x|\to\infty$ uniformly in $t\in[0,1]$ (the function $y$ is continuous and strictly
positive on the compact interval $[0,1]$, hence bounded above and below by positive
constants; thus $s=|x|/y(t)\to\infty$ uniformly in $t$ as $|x|\to\infty$, and since
$|x|^2=s^2y^2$ the closed form decays like $|x|^{-2}$).

\subsection{The odd profile}
For the odd (Dirichlet-parity) example take
\begin{equation}\label{eq:profileFD}
  F_D(\xi)=\xi\,F_N(\xi)=\xi\,e^{-\xi^2}(1+\xi^2)^{-k}.
\end{equation}
Since $(\xi h)''=\xi(h''+2h'/s)$ for even $h=h(s)$, one gets, across the
smooth extension over $\{\xi=0\}$,
\begin{equation}\label{eq:LapFD}
  \frac{F_D''}{F_D}=\frac{F_N''}{F_N}+\frac{2g'}{s}
  =4s^2-2+R_D(s),
  \qquad
  R_D(s)=R_N(s)-4-\frac{4k}{1+s^2},
\end{equation}
with $R_{D,\infty}=8k-4$. The same computation as above, with $R_N$ replaced by $R_D$
and $\theta'$ chosen accordingly, produces a real, smooth, bounded potential
$V_D=(R_D(|x|/y)-R_{D,\infty})/y^2=O(|x|^{-2})$; in closed form,
\begin{equation}\label{eq:VD}
  V_D(x,t)=\frac{2k\bigl((2k-5)x^2-7y(t)^2\bigr)}{\bigl(x^2+y(t)^2\bigr)^2}.
\end{equation}
The corresponding solution
$u_D=y^{-1/2}F_D(x/y)\,e^{i(bx^2+\theta)}$ is smooth and odd in $x$, and the potential
$V_D$ is smooth and bounded across the zero set $\{x=0\}$ because it depends only on
$|x|/y$.

\subsection{Endpoint integrability and conclusion}
For the rational-tail profiles, it remains to verify the weighted $L^2$ conditions. At
$t=0$, $y(0)=2$, so from
\eqref{eq:ansatz} and \eqref{eq:profileF},
\[
  e^{x^2/4}u_N(x,0)=2^{-1/2}(1+x^2/4)^{-k}e^{i\phi(x,0)},
\]
because the weight $e^{x^2/4}$ cancels the Gaussian $e^{-x^2/4}$ in $F_N$. Hence
\[
  \int_{\R}e^{x^2/2}|u_N(x,0)|^2\,dx
  =\frac12\int_{\R}(1+x^2/4)^{-2k}\,dx<\infty
  \iff 4k>1 .
\]
The same computation at $t=1$ gives the terminal bound under the same condition. For the
odd profile the extra factor $x/2$ contributes two more powers of $r$, so $4k>3$ suffices.
Choosing $k>3/4$ makes both the even and odd examples
admissible.

\begin{proposition}[Rational-tail example]\label{prop:rational}
Equation \eqref{eq:schr} admits nonzero smooth even and odd solutions. They satisfy the
endpoint bounds of \cref{thm:main}, and their bounded real scalar potentials are
$O(|x|^{-2})$ uniformly in $t\in[0,1]$.
\end{proposition}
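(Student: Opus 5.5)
The plan is to assemble the pieces already computed in this section. We fix $k>3/4$ and define the even solution $u_N$ by the ansatz \eqref{eq:ansatz} with profile $F=F_N$ from \eqref{eq:profileF}, and the odd solution $u_D$ with $F=F_D$ from \eqref{eq:profileFD}. In both cases the phase $\theta$ is obtained by integrating \eqref{eq:theta}, with $R_{N,\infty}$ replaced by $R_{D,\infty}=8k-4$ in the odd case. Since $y$ is smooth and bounded below on $[0,1]$ (\cref{lem:y}), the functions $b=y'/(4y)$ and $\theta$ are smooth. Hence $u_N$ and $u_D$ are smooth on $\R\times[0,1]$. They are nonzero because $F_N>0$ and $F_D$ vanishes only at $\xi=0$. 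Parity in $x$ follows from that of $F$, since the phase $bx^2+\theta$ is even.

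Next we check that $u$ solves \eqref{eq:schr} with the stated $V$. Where $u\neq0$, \cref{lem:real} gives $(i\partial_tu+\partial_x^2u)/u=-\phi_t-\phi_x^2+\rho_{xx}/\rho$, which is real. The expansion \eqref{eq:Vexpand}, combined with \eqref{eq:LapFN} or \eqref{eq:LapFD}, reduces this quotient to \eqref{eq:VN} or \eqref{eq:VD}. Two facts drive this reduction. First, the $x^2$ coefficient $-y''/(4y)+4/y^4$ vanishes by the ODE \eqref{eq:ode}. Second, the constant term is absorbed by the choice of $\theta'$. In the even case $u_N$ never vanishes, so the identity $i\partial_tu+\partial_x^2u=V_Nu$ holds everywhere.

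The odd case is the main obstacle, because $u_D$ vanishes on $\{x=0\}$ and the quotient is not defined there. The approach is to note that the closed form \eqref{eq:VD} is a smooth function on all of $\R\times[0,1]$, since its denominator is at least $y^4>0$. Both sides of $i\partial_tu_D+\partial_x^2u_D=V_Du_D$ are continuous and agree on the dense set $\{x\neq0\}$, so they agree everywhere. Equivalently, \eqref{eq:LapFD} extends smoothly across $\xi=0$ because $2g'(s)/s=-4-4k/(1+s^2)$ is smooth in $\xi^2$. Boundedness and the $O(|x|^{-2})$ decay, uniform in $t$, come directly from the closed forms: the numerator is $O(x^2+1)$ and the denominator is at least $(x^2+c^2)^2$, where $c=\min_{[0,1]}y>0$.

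Finally, for the endpoint weights we use $y(0)=y(1)=2$. Then $|u(x,t)|=2^{-1/2}|F(x/2)|$ at $t=0,1$, and the Gaussian factor $e^{-x^2/4}$ in $F$ cancels the weight $e^{x^2/4}$. This leaves $\int(1+x^2/4)^{-2k}\,dx<\infty$, which holds for $4k>1$, in the even case. In the odd case the extra factor $x^2/4$ leaves $\int(x^2/4)(1+x^2/4)^{-2k}\,dx<\infty$, which holds for $4k>3$. Both conditions hold since $k>3/4$, which completes the verification of the proposition.
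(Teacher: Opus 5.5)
Your proposal is correct and follows the paper's proof essentially step for step. You fix $k>3/4$, build $u_N,u_D$ from the lens ansatz, use \cref{lem:real} and the ODE \eqref{eq:ode} to reduce the potential to the closed forms \eqref{eq:VN} and \eqref{eq:VD}, and check the weighted integrals at $t=0,1$, where $y=2$. Your continuity-and-density argument across the zero set $\{x=0\}$ of $u_D$ is a valid way to make precise what the paper calls ``direct substitution''; \cref{prop:master} achieves the same by working with the pointwise identity $F''=(4\xi^2+W_F)F$.
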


\begin{proof}
Fix $k>3/4$. Define $y$ by \eqref{eq:y}, $b$ by \eqref{eq:b}, $\theta$ by
\eqref{eq:theta} (respectively its $R_D$-analogue), and $u_\sigma$ by \eqref{eq:ansatz}
with $F_\sigma$, $\sigma\in\{N,D\}$. By \cref{lem:real} the potential is real, and by the
cancellation of the quadratic term (via \cref{lem:y}) it equals the bounded
$O(|x|^{-2})$ function \eqref{eq:VN} (resp.\ its $R_D$-analogue); a direct substitution
confirms that $u_\sigma$ solves \eqref{eq:schr} with this $V_\sigma$. The weighted
endpoint bounds hold by the integrability computation above. Finally $u_N$ is even and
$u_D$ is odd in $x$.
\end{proof}

\subsection{A master construction}\label{subsec:master}
The reality and boundedness of the potential depend on the profile only through the
zeroth-order remainder left after removing the Gaussian part of $F''$, so the construction
proceeds from two conditions on $F$, of which the rational tail is one convenient
realization. We phrase the first condition as a pointwise identity so that the resulting
Schr\"odinger identity remains valid even at zeros of $F$.

\begin{proposition}\label{prop:master}
Let $F\in C^\infty(\R;\R)$ with $F\not\equiv0$, and suppose that
\begin{enumerate}
\item[\textup{(i)}] there is a bounded smooth function $W_F\colon\R\to\R$, with a finite
limit $W_\infty:=\lim_{|\xi|\to\infty}W_F(\xi)$, such that
\[
  F''(\xi)=\bigl(4\xi^2+W_F(\xi)\bigr)F(\xi)\quad\text{on }\R;
\]
\item[\textup{(ii)}] $e^{|\cdot|^2}F\in L^2(\R)$.
\end{enumerate}
Let $y,b$ be as in \eqref{eq:y} and \eqref{eq:b}, and take
$\theta'(t)=W_\infty/y(t)^2$. Then
$u=y^{-1/2}F(x/y)\,e^{i(bx^2+\theta)}$ is a nonzero smooth solution of \eqref{eq:schr} on
$\R\times[0,1]$ whose potential
\begin{equation}\label{eq:Vmaster}
  V(x,t)=\frac{W_F(x/y(t))-W_\infty}{y(t)^2}
\end{equation}
is smooth, real, scalar and bounded, and $u$ satisfies the endpoint bounds
$e^{x^2/4}u(\cdot,0),e^{x^2/4}u(\cdot,1)\in L^2(\R)$. If in addition
$W_F(\xi)-W_\infty=O(|\xi|^{-2})$, then $V=O(|x|^{-2})$ uniformly in $t$.
\end{proposition}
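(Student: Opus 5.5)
The plan is to verify the Schrödinger identity by a direct computation that never divides by $u$. This keeps the argument valid at zeros of $F$. The pieces are the continuity computation of \cref{lem:real}, the ODE of \cref{lem:y}, and hypothesis (i).

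\textbf{Step 1: the identity for $u$.} Write $u=\rho e^{i\phi}$ with $\rho=y^{-1/2}F(\xi)$, $\xi=x/y$, $\phi=bx^2+\theta$. The expansion in the proof of \cref{lem:real} holds for any smooth real $\rho$ and $\phi$, without dividing by $\rho$:
\[
  i\partial_tu+\partial_x^2u=\bigl[(-\rho\phi_t+\rho_{xx}-\rho\phi_x^2)+i(\rho_t+2\rho_x\phi_x+\rho\phi_{xx})\bigr]e^{i\phi}.
\]
The imaginary bracket vanishes identically, by the computation already done there, since $4b=y'/y$.

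For the real bracket, I use $\rho_{xx}=y^{-5/2}F''(\xi)$ and substitute (i):
\[
  \rho_{xx}=y^{-2}\bigl(4\xi^2+W_F(\xi)\bigr)\rho .
\]
Together with $\phi_t+\phi_x^2=(b'+4b^2)x^2+\theta'$ and $b'+4b^2=y''/(4y)=4/y^4$ (by \cref{lem:y}), the $x^2$ terms $4x^2/y^4$ cancel. Choosing $\theta'=W_\infty/y^2$ then leaves the real bracket equal to $V\rho$, with $V$ as in \eqref{eq:Vmaster}. Hence $i\partial_tu+\partial_x^2u=Vu$ pointwise, including on the zero set of $F$.

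\textbf{Step 2: regularity and size of $u$ and $V$.} Smoothness of $u$ and $V$ follows because $y$ is smooth and bounded below on $[0,1]$, and $F$ and $W_F$ are smooth. Reality of $V$ is immediate from the formula. For boundedness, $|V|\le 2\|W_F\|_\infty/\min y^2$. The solution $u$ is nonzero since $F\not\equiv0$ and $y>0$.

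If in addition $W_F(\xi)-W_\infty=O(|\xi|^{-2})$, then $V=O(y^{-2}\cdot y^2/x^2)=O(|x|^{-2})$. This holds uniformly in $t$, because $y$ is bounded above and below on $[0,1]$.

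\textbf{Step 3: the endpoint bounds.} At $t=0$ and $t=1$ we have $y=2$, so $|u(x,t)|=2^{-1/2}|F(x/2)|$. Then
\[
  \int e^{x^2/2}|u|^2\,dx=\tfrac12\int e^{x^2/2}|F(x/2)|^2\,dx=\int e^{2\eta^2}|F(\eta)|^2\,d\eta ,
\]
using the substitution $x=2\eta$. The right-hand side is $\|e^{|\cdot|^2}F\|_{L^2}^2<\infty$ by (ii).

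\textbf{Main obstacle.} Nothing here is genuinely hard. The only point needing care is Step 1: the statement must hold at zeros of $F$, so I avoid the quotient $F''/F$ and use the pointwise identity (i) directly. The rest consists of the routine checks $b'+4b^2=y''/(4y)$ and the scaling in Step 3.
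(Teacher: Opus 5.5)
Your proposal is correct and follows essentially the same route as the paper. Both arguments expand $u=\rho e^{i\phi}$ without dividing by $u$, kill the imaginary bracket with the continuity equation of \cref{lem:real}, substitute (i) pointwise together with $b'+4b^2=y''/(4y)=4/y^4$ so the $x^2$ terms cancel, and finish with the substitution $x=2\eta$ at $t\in\{0,1\}$; your explicit bound $|V|\le 2\|W_F\|_\infty/\min y^2$ merely spells out the boundedness the paper leaves implicit.
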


\begin{proof}
Write $u=\rho e^{i\phi}$ with $\rho=y^{-1/2}F(x/y)$ and $\phi=bx^2+\theta$. Separating
real and imaginary parts --- an identity valid on all of $\R\times[0,1]$, cf.\
\cref{lem:real} ---
\[
  i\partial_t u+\partial_x^2u=\bigl[(\rho_{xx}-\rho\phi_t-\rho|\phi_x|^2)
  +i(\rho_t+2\rho_x\phi_x+\rho\phi_{xx})\bigr]e^{i\phi}.
\]
The imaginary bracket vanishes identically by the continuity equation \eqref{eq:cont}, which
holds for every real profile. For the real bracket, the identity (i) gives
\[
\begin{aligned}
  \rho_{xx}=y^{-5/2}F''(x/y)
  &=y^{-5/2}\bigl(4x^2/y^2+W_F(x/y)\bigr)F(x/y)\\
  &=\Bigl(\frac{4x^2}{y^4}+\frac{W_F(x/y)}{y^2}\Bigr)\rho .
\end{aligned}
\]
Since $\phi_t+|\phi_x|^2=(b'+4b^2)x^2+\theta'=\tfrac{4}{y^4}x^2+\theta'$ by
$b'+4b^2=y''/(4y)$ and the ODE \eqref{eq:ode}, the $x^2$ terms cancel and, with
$\theta'=W_\infty/y^2$,
\[
  \rho_{xx}-\rho(\phi_t+|\phi_x|^2)=\frac{W_F(x/y)-W_\infty}{y^2}\,\rho=V\rho .
\]
Hence $i\partial_t u+\partial_x^2u=Vu$ on all of $\R\times[0,1]$, including the zero set
of $u$, with $V$ as in \eqref{eq:Vmaster}. By (i) it is smooth, real and bounded, and it is
$O(|x|^{-2})$ when $W_F-W_\infty=O(|\xi|^{-2})$, since $y$ is bounded above and below on
$[0,1]$. For the endpoint bounds, at both $t=0$ and $t=1$ one has $y=2$, and the
substitution $\xi=x/2$ gives
\[
  \int_{\R}e^{x^2/2}|u(x,t)|^2\,dx
  =\int_{\R}e^{2\xi^2}|F(\xi)|^2\,d\xi<\infty,
  \qquad t\in\{0,1\},
\]
by (ii).
\end{proof}

The rational-tail profiles used above are instances of this proposition. More generally,
for an even positive $h$ and $F(\xi)=e^{-\xi^2}h(\xi)$ one has
\[
  W_F=-2+\frac{h''}{h}-4\xi\,\frac{h'}{h}.
\]
Thus (i) holds when $h''/h$ and $\xi h'/h$ are bounded and have finite limits. This includes
$h=(1+\xi^2)^{-k}$ and the log-modified tails
$h=(1+\xi^2)^{-k}(\log(e+\xi^2))^{-m}$ whenever (ii) holds: $4k>1$, or $4k=1$ with
$m>\tfrac12$. Since the support of $V$ is controlled by that of $W_F-W_\infty$, choosing
the latter to vanish outside a compact interval produces the main result.

\begin{corollary}\label{cor:nonzero}
Every profile satisfying \textup{(i)--(ii)} yields a potential \eqref{eq:Vmaster} that is not
identically zero.
\end{corollary}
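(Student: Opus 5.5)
We argue by contradiction and show that $V\equiv0$ forces $F\equiv0$. By \eqref{eq:Vmaster}, $V\equiv0$ on $\R\times[0,1]$ means $W_F(\xi)=W_\infty$ for every $\xi$, because $x/y(t)$ covers all of $\R$ already at fixed $t$. Condition (i) then reduces to the linear ODE
\[
  F''(\xi)=\bigl(4\xi^2+c\bigr)F(\xi),\qquad c:=W_\infty\in\R,
\]
which is the harmonic-oscillator (Weber) equation. The goal is to show that no nonzero solution of it satisfies (ii), i.e.\ $e^{\xi^2}F\in L^2(\R)$. This contradicts $F\not\equiv0$.

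The first step is to factor out the Gaussian. Write $F=e^{-\xi^2}h$. This substitution is exactly the one underlying the formula $W_F=-2+h''/h-4\xi h'/h$ recorded above, but we use it here in linear form, valid even where $h$ vanishes:
\[
  h''-4\xi h'-(c+2)h=0 .
\]
Condition (ii) now says that $h\in L^2(\R)$.

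The second step is a Liouville/energy-type argument to rule out a nonzero $L^2$ solution $h$. The cleanest route uses the asymptotic behaviour of solutions of the Weber equation. The two-dimensional solution space of $F''=(4\xi^2+c)F$ is spanned, near $+\infty$, by a recessive solution behaving like $\xi^{-(c+2)/4}e^{-\xi^2}$ and a dominant one behaving like $\xi^{(c-2)/4}e^{\xi^2}$. These asymptotics follow from a standard WKB/Levinson argument, or from parabolic cylinder function asymptotics, after the rescaling $\xi=z/2$. Any nonzero combination containing the dominant solution makes $e^{\xi^2}F$ grow like $e^{2\xi^2}$, which is not in $L^2$. So $F$ must be recessive at $+\infty$, and then $e^{\xi^2}F\sim\xi^{-(c+2)/4}$ there. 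Since $e^{\xi^2}F\in L^2$ near $+\infty$, this forces $(c+2)/2>1$, i.e.\ $c>0$. The symmetric argument at $-\infty$ requires $F$ to be recessive there too. But $F$ recessive at both ends with $F''=(4\xi^2+c)F$ and $c>0$ is impossible: multiplying by $F$ and integrating (the boundary terms vanish by recessive decay) gives
\[
  -\int (F')^2=\int (4\xi^2+c)F^2>0 ,
\]
which forces $F\equiv0$.

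Alternatively, one can avoid the asymptotics and argue directly with $h$. Multiply $h''-4\xi h'-(c+2)h=0$ by $e^{-2\xi^2}h$; this gives $(e^{-2\xi^2}h')'=(c+2)e^{-2\xi^2}h$, and the self-adjoint form then yields sign information. That version still needs $h\in L^2$ to control the boundary terms, so I would keep the asymptotic approach as the primary argument.

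The main obstacle is justifying the asymptotic dichotomy rigorously, especially the exact power $\xi^{-(c+2)/4}$ of the recessive solution. That exponent is what decides whether (ii) is compatible with decay, and it is exactly where the critical endpoint sits. I would obtain it by writing $F=e^{-\xi^2}\xi^{a}(1+o(1))$ and checking via a Volterra integral equation that $a=-(c+2)/4$. The dominant branch I would obtain by reduction of order, $F_2=F_1\int F_1^{-2}$, which grows like $e^{\xi^2}$ up to powers.
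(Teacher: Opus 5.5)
Your argument is correct, but it takes a different route from the paper. The paper uses no asymptotics. It extracts from (ii) only that $F\in L^2(\R)$, reads (i) with $W_F\equiv W_\infty$ as the eigenvalue equation $HF=-W_\infty F$ for $H=-\partial_\xi^2+4\xi^2$, and pairs this distributional identity with the Hermite functions $\psi_j$. Completeness of $\{\psi_j\}$ and simplicity of the spectrum force $F=e^{-\xi^2}P$ with $P$ a Hermite polynomial, and then $e^{\xi^2}F=P\notin L^2(\R)$ contradicts (ii) in full strength. That proof is shorter and needs only the spectral decomposition of the harmonic oscillator.

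You replace this global spectral input by a local analysis at each end. The recessive/dominant dichotomy, with the exact exponent $-(c+2)/4$, shows that $e^{\xi^2}F\in L^2$ near $+\infty$ alone forces $F$ to be recessive there with $c=W_\infty>0$. Positivity of $4\xi^2+c$ then rules out a nonzero solution recessive at both ends. The price is the asymptotic theory of the Weber equation, plus decay of $F'$ to kill the boundary terms $FF'$. You can bypass the latter with a maximum-principle argument: $F''=(4\xi^2+c)F$ with $c>0$ forbids a positive maximum or a negative minimum of a function tending to $0$ at $\pm\infty$.

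What you gain is a sharper statement, which needs the critical weight at only one end and mere decay at the other. It also matches the mechanism in the proof of \cref{thm:main}. There $W_\infty=8\nu-2$, the tail $h=U(\nu,\tfrac12,2r^2)\sim(2r^2)^{-\nu}$ is exactly your recessive branch, and $\nu>\tfrac14$ is exactly your condition $c>0$. Your last step explains why $W_F$ must be nonconstant on the compact region where the two recessive tails are glued together.

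Your alternative paragraph would not close the argument on its own: the weighted identity yields at best $c+2\le 0$, which the Hermite cases satisfy. You rightly do not rely on it.
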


\begin{proof}
If $V\equiv0$ then $W_F\equiv W_\infty$, so (i) reads $HF=-W_\infty F$ for the
one-dimensional harmonic oscillator $H=-\partial_\xi^2+4\xi^2$. By (ii), $F\in L^2(\R)$.
Pairing this distributional identity with each Hermite eigenfunction $\psi_j$ of $H$ gives
$(\lambda_j+W_\infty)\langle F,\psi_j\rangle=0$. Since the Hermite functions form an
orthonormal basis of $L^2(\R)$, a nonzero $F$ lies in one eigenspace and has the form
$F=e^{-\xi^2}P$ with $P$ a nonzero Hermite polynomial. Then
$e^{\xi^2}F=P\notin L^2(\R)$, contradicting (ii). The pure Gaussian $F=e^{-\xi^2}$ is the
ground-state case $P\equiv1$, for which $V\equiv0$ but condition (ii) fails.
\end{proof}

\begin{proof}[Proof of \cref{thm:main}]
Fix $R>0$ and $\nu>\tfrac14$. On $r>R$ let
\[
  h(r)=U\!\bigl(\nu,\tfrac12,2r^2\bigr),
\]
where $U$ is the Tricomi confluent hypergeometric function
\cite[\S\S13.2, 13.4, 13.7]{Olver-DLMF}. For $\nu>0$ and $z>0$, its integral representation
\[
  U(\nu,b,z)=\frac1{\Gamma(\nu)}\int_0^\infty e^{-zt}t^{\nu-1}(1+t)^{b-\nu-1}\,dt
\]
shows that it is smooth and positive and that $U(\nu,b,z)\sim z^{-\nu}$ as
$z\to\infty$. Writing $f(z)=U(\nu,\tfrac12,z)$, Kummer's equation
$zf''+(\tfrac12-z)f'-\nu f=0$ becomes
\[
  h''(r)-4r h'(r)=8\nu h(r),
\]
so the corresponding profile remainder equals the constant $8\nu-2$ for $r>R$.

Extend $\log h$ smoothly to $[0,\infty)$, constant near $r=0$ and unchanged for
$r\ge R$. After exponentiating and reflecting evenly, let the same letter denote the
resulting positive smooth function on $\R$. Set $F(\xi)=e^{-\xi^2}h(\xi)$ and
\[
  W_F(\xi)=\frac{F''(\xi)}{F(\xi)}-4\xi^2.
\]
Then $F$ and $W_F$ are smooth, $F>0$, and $W_F\equiv W_\infty:=8\nu-2$ for
$|\xi|>R$. Moreover,
$e^{\xi^2}F(\xi)=h(\xi)=O(|\xi|^{-2\nu})$, which belongs to $L^2(\R)$ because
$4\nu>1$. Proposition~\ref{prop:master} therefore gives a nonzero smooth endpoint solution
with a smooth bounded real scalar potential. Formula \eqref{eq:Vmaster} shows that
$V(x,t)=0$ whenever $|x|>R\,y(t)$. Since $y(t)\leq 2$ on $[0,1]$, it follows that
\[
  \operatorname{supp}V(\cdot,t)\subseteq[-2R,2R]
  \qquad\text{for every }t\in[0,1].
\]
Thus the potential is supported in one fixed compact spatial interval for all times.
This proves \cref{thm:main}.
\end{proof}

\section{Remarks}\label{sec:remarks}

\begin{remark}[Higher dimensions and the magnetic example of \cite{CassanoFanelli-2015}]
The main result is stated in one dimension and for $T=1$. The mechanism itself is
dimension-independent: on $\mathbb R^n$ one replaces $y^{-1/2}$ by $y^{-n/2}$ and
$\partial_x^2$ by $\Delta$, while the compact-support profile is obtained from
$U(\nu,n/2,2r^2)$ with $\nu>n/4$. The same computation therefore gives the analogous
endpoint example in every dimension $n\ge1$; general time intervals follow by taking
endpoint widths $A,B>0$ with $AB=4T$.

Cassano and Fanelli obtain an endpoint example with real electric and magnetic
potentials by adjoining a magnetic potential
$\mathbf A\not\equiv0$; the imaginary zero-order term $i\,\mathrm{div}\,\mathbf A$ is what allows the
Gaussian profile to persist at the endpoint. Our construction is purely scalar
($\mathbf A\equiv0$): reality of $V$ is enforced instead by the continuity equation
\eqref{eq:cont}, which holds identically once the phase is the pseudoconformal one,
$b=y'/(4y)$. By contrast, the magnetic endpoint construction of
\cite{CassanoFanelli-2015} is three-dimensional.
To the best of our knowledge no purely scalar real-valued endpoint example was available
previously. In the free case the weighted $L^2$ endpoint is trivial; the previously known
nonzero endpoint examples are the complex-valued scalar example of \cite[Thm.~2]{EKPV-Duke-2010}
and the real \emph{electromagnetic} example of \cite{CassanoFanelli-2015}. We do not treat
discrete Schr\"odinger operators or non-Gaussian (Morgan-type) weights.
\end{remark}

\begin{remark}[The role of the rational tail]
The extremal profile in the pointwise free Hardy theorem is a pure Gaussian, which in our
construction corresponds to $k=0$: then $R_N\equiv0$ by \eqref{eq:RN}, so $V_N\equiv0$ and
$u$ is the free lens solution. Thus a pure Gaussian amplitude \emph{is} sustained at
the endpoint by a bounded (indeed vanishing) scalar potential; the obstruction is not the
boundedness of $V$ but integrability. At $k=0$ the critical Gaussian weight cancels the
amplitude exactly, leaving the non-integrable constant $2^{-1/2}|e^{i\phi}|$, so the pure
Gaussian fails the weighted $L^2$ endpoint condition, which requires $4k>1$. The rational
tail $(1+\xi^2)^{-k}$ plays a double role: it leaves an integrable polynomial factor once
the critical weight cancels, so that the strict weighted $L^2$ endpoint condition holds,
and it simultaneously renders $V$ a nonzero but bounded $O(|x|^{-2})$ real scalar
potential. The construction thus lives at the endpoint of the Hardy \emph{inequality}
without contradicting the rigidity of its free extremals, precisely because the weighted
$L^2$ endpoint excludes the pure Gaussian.
\end{remark}

\section*{Acknowledgements}
The main results of this paper were obtained by Eureka, a multi-agent system for resolving
mathematical conjectures through human--AI interaction, and were subsequently verified by
the authors. The proof of Theorem~\ref{thm:main} has additionally been formally verified
in the Lean proof assistant; the formalization is available in the
\href{https://github.com/Shealizon/hardy-compact-support-counterexample}{project repository}.
This work is supported by the Fundamental and Interdisciplinary Disciplines Breakthrough Plan
of the Ministry of Education of China.


\end{document}